\newtheorem{theorem}{Theorem}
\newtheorem{lemma}[theorem]{Lemma}
\newtheorem{corollary}[theorem]{Corollary}
\newtheorem{conjecture}[theorem]{Conjecture}
\newtheorem{question}[theorem]{Question}
\title{Generalised Majority Colourings of Digraphs}
\author{Ant\'onio Gir\~ao\thanks{\,Department of Pure Mathematics and Mathematical Statistics, University of Cambridge, Wilberforce Road, Cambridge CB3 0WB, UK;
    \texttt{A.Girao@dpmms.cam.ac.uk}.}
   \and Teeradej Kittipassorn\thanks{\,Departamento de Matem\'atica, Pontif\'icia Universidade Cat\'olica do Rio de Janeiro (PUC-Rio), Rua Marqu\^es de S\~ao Vicente 225, G\'avea, Rio de Janeiro, RJ 22451-900, Brazil; \texttt{ping41@mat.puc-rio.br}.}
  \and Kamil Popielarz\thanks{\,Department of Mathematics, University Of Memphis,
      Memphis, TN 38152, USA; \texttt{kamil.popielarz@\-gmail.com} }}
\begin{document}

\maketitle
\begin{abstract}
    We almost completely solve a number of problems related to a concept called majority colouring recently studied by Kreutzer, Oum, Seymour, van der Zypen and Wood. They raised the problem of determining, for a natural number $k$, the smallest number $m=m(k)$ such that every digraph can be coloured with $m$ colours where each vertex has the same colour as at most a $1/k$ proportion of its out-neighbours. We show that $m(k)\in\{2k-1,2k\}$. We also prove a result supporting the conjecture that $m(2)=3$. Moreover, we prove similar results for a more general concept called majority choosability.
\end{abstract}

For a natural number $k\geq 2$, a \emph{$\frac{1}{k}$-majority colouring} of a digraph is a colouring of the vertices such that each vertex receives the same colour as at most a $1/k$ proportion of its out-neighbours. We say that a digraph $D$ is \emph{$\frac{1}{k}$-majority $m$-colourable} if there exists a $\frac{1}{k}$-majority colouring of $D$ using $m$ colours. The following natural question was recently raised by Kreutzer, Oum, Seymour, van der Zypen and Wood~\cite{Kreutzer}.

\begin{question}
\label{que:main}
Given $k\geq 2$, determine the smallest number $m=m(k)$ such that every digraph is $\frac{1}{k}$-majority $m$-colourable.
\end{question}

In particular, they asked whether $m(k)=O(k)$. Let us first observe that $m(k)\geq 2k-1$. Consider a tournament on $2k-1$ vertices where every vertex has out-degree $k-1$. Any $\frac{1}{k}$-majority colouring of this tournament must be a proper vertex-colouring, and hence it needs at least $2k-1$ colours. Conversely, we prove that $m(k)\leq 2k$.

\begin{theorem}
\label{thm1}
Every digraph is $\frac{1}{k}$-majority $2k$-colourable for all $k\geq 2$.
\end{theorem}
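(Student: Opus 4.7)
The natural first step is to consider a $2k$-colouring $c\colon V(D)\to[2k]$ that minimises $\Phi(c):=|\{(u,v)\in E(D):c(u)=c(v)\}|$, the number of monochromatic arcs, and to deduce the $\frac{1}{k}$-majority property from its local optimality. For every vertex $v$ and every colour $i\neq c(v)$, recolouring $v$ to $i$ cannot decrease $\Phi$, which gives
\[
s_i(v)+t_i(v)\;\ge\;s_{c(v)}(v)+t_{c(v)}(v),
\]
where $s_j(v)=|N^+(v)\cap c^{-1}(j)|$ and $t_j(v)=|N^-(v)\cap c^{-1}(j)|$. Summing over the $2k-1$ alternative colours and rearranging yields
\[
s_{c(v)}(v)+t_{c(v)}(v)\;\le\;\frac{d^+(v)+d^-(v)}{2k}.
\]

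The main obstacle is the shape of this inequality: it only bounds the symmetric sum $s+t$, whereas the $\frac{1}{k}$-majority condition constrains $s$ alone. One actually gets $s_{c(v)}(v)\le d^+(v)/k$ only when $d^-(v)\le d^+(v)$, and a single vertex with very large in-degree and small out-degree can wreck the argument. So the naive monochromatic-arc potential is insufficient, and some asymmetric device is needed to kill the $t$-contribution.

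My plan to do this is to split the palette into two disjoint blocks of $k$ colours and correspondingly bipartition the vertex set. I would first produce a partition $V(D)=A\sqcup B$ that is \emph{out-unfriendly} in the sense that every vertex has at most half of its out-neighbours on its own side, via local search that maximises the number of cross out-arcs. Colouring $A$ from one block of $k$ colours and $B$ from the other forces all same-coloured out-neighbours of any $v$ to lie on $v$'s own side, of which there are at most $d^+(v)/2$; it then suffices to $k$-colour each induced subdigraph $D[A]$, $D[B]$ so that every vertex has at most $d^+(v)/k$ — i.e.\ roughly twice the within-side average — same-coloured out-neighbours. The factor-$2$ slack is precisely what should let the minimum-monochromatic-arc argument, reapplied inside each side, absorb the residual in/out discrepancy.

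The hardest step will be the within-side colouring, because even with the factor-$2$ slack a vertex can still have much larger in-degree than out-degree when restricted to its own side. One likely has to strengthen the bipartition to be simultaneously out- and in-unfriendly, or bundle both ingredients into a single inductive statement (on the number of vertices, say) that asserts the partition and the colouring together. Establishing the out-unfriendly partition for digraphs is itself non-trivial — the standard max-cut local-search argument controls $d^+_A(v)+d^-_A(v)$ rather than $d^+_A(v)$ alone — so care is needed there as well.
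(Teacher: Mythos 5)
You have correctly diagnosed the obstacle (a naive minimise-monochromatic-arcs argument controls $s_{c(v)}(v)+t_{c(v)}(v)$, not $s_{c(v)}(v)$ alone), but the device you propose to overcome it does not work, and the plan has a genuine gap already at its first step. An \emph{out-unfriendly} bipartition --- every vertex having at most half of its out-neighbours on its own side --- need not exist: in the directed triangle every vertex has out-degree $1$, so the condition forces the single out-neighbour of each vertex to lie on the other side, i.e.\ a proper $2$-colouring of a directed $3$-cycle, which is impossible. (Indeed, deciding how few colours suffice for such ``$\frac12$-majority'' colourings is exactly the hard part of this problem; two colours provably do not suffice.) Moreover, even granting the bipartition, your within-side step reintroduces the very same difficulty you started with: local search inside $D[A]$ again only bounds the symmetric quantity $s+t$ restricted to $A$, and you acknowledge this without resolving it. So as it stands the argument does not close.

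The missing idea is an \emph{asymmetric reweighting} rather than a palette split. The paper deduces the theorem from a lemma of Ball/Alon: take $a_{ij}=1/d^+(v_i)$ for each arc $v_iv_j$ (so every row sums to at most $1$) and partition the index set into $2k$ classes so that each within-class row sum is at most $1/k$. The proof of that lemma is precisely your local-search argument, but applied to the weighted entries $b_{ij}=u_ia_{ij}$, where $(u_1,\dots,u_n)$ is the positive left Perron--Frobenius eigenvector of $A$ (after perturbing so all off-diagonal entries are positive and rows sum to exactly $1$). The point of this choice is that $\sum_j b_{ij}=\sum_j b_{ji}=u_i$, so when local optimality of the minimising partition gives the symmetric bound $\sum_{j\in f^{-1}(r)}(b_{ij}+b_{ji})\le 2u_i/(2k)$, the unwanted ``in'' contribution can simply be discarded and the bound divided by $u_i$, yielding $\sum_{j\in f^{-1}(r)}a_{ij}\le 1/k$, i.e.\ the pure out-degree condition. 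In short: your potential function is the right one, but it must be applied to the Perron-reweighted matrix, not to raw arc counts, and no bipartition or induction is then needed.
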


This is an immediate consequence of a result of Keith Ball (see~\cite{Bourgain}) about partitions of matrices. We shall use a slightly more general version proved by Alon~\cite{Alon}.

\begin{lemma}
\label{lem1}
    Let $A = (a_{ij})$ be an $n\times n$ real matrix where $a_{ii} = 0$ for all $i$, $a_{ij} \ge 0$ for all $i \neq j$, and $\sum_{j}a_{ij} \le 1$ for all $i$. Then, for every $t$ and all positive reals $c_{1}, \dots, c_{t}$ whose sum is $1$, there is a partition of $\{1, 2, \dots, n\}$ into pairwise disjoint sets $S_{1}, S_{2}, \dots, S_{t}$, such that for every $r$ and every $i \in S_{r}$, we have $\sum_{j \in S_{r} }a_{ij} \le 2c_{r}$.
\end{lemma}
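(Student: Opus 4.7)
My plan is a local-search argument based on the potential
\[
\Phi(S_1,\ldots,S_t) \;=\; \sum_{r=1}^{t} \frac{1}{c_r} \sum_{i,j \in S_r} a_{ij},
\]
which is a finite-valued function on the set of ordered partitions of $[n]$ into $t$ parts. I would take $(S_1,\ldots,S_t)$ to be any partition minimising $\Phi$ and try to extract the desired per-vertex inequality from its local optimality.

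For $i \in S_r$ and any alternative class $s$, consider moving $i$ from $S_r$ to $S_s$. Writing $\alpha_u=\sum_{j\in S_u}a_{ij}$ (row-$i$ contribution to $S_u$) and $\beta_u=\sum_{j\in S_u}a_{ji}$ (column-$i$ contribution to $S_u$) and using $a_{ii}=0$, the change in $\Phi$ is
\[
\Delta\Phi \;=\; \frac{\alpha_s+\beta_s}{c_s} \;-\; \frac{\alpha_r+\beta_r}{c_r},
\]
and minimality forces $\Delta\Phi\ge 0$ for every $s$. Taking the convex combination of these inequalities with weights $c_s$ (summing to $1$) yields
\[
\frac{\alpha_r+\beta_r}{c_r} \;\le\; \sum_{s}(\alpha_s+\beta_s) \;=\; R_i+C_i,
\]
where $R_i=\sum_j a_{ij}\le 1$ by hypothesis and $C_i=\sum_j a_{ji}$ is the column sum of $A$ at $i$. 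In the fortunate situation that $C_i\le 1$ also holds --- for instance when $A$ is symmetric --- this immediately gives $\alpha_r \le \alpha_r+\beta_r \le 2c_r$, which is exactly the conclusion of the lemma.

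The main obstacle in full generality is that the hypothesis only bounds row sums of $A$, so column sums may be arbitrarily large and the symmetric bound above does not by itself control $\alpha_r$. Overcoming this asymmetry is the heart of the argument. One natural route is to work instead with the column-damped matrix $a'_{ij}=a_{ij}/\max(1,C_j)$, whose row and column sums are both at most $1$, minimise $\Phi$ for $A'$, and transfer the resulting bound back to $A$; the delicate point is to arrange the rescaling so that the factor of $2$ survives. An alternative is an inductive peeling step: identify a vertex with exceptionally large column sum, argue via the first-order condition that in a minimiser it must sit in a class in which its row-contribution is already small, and recurse on the remaining smaller matrix, whose row sums are still at most $1$. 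In either approach, the chief technical effort lies in showing that the structural manipulation does not erode the slack needed to reach the clean bound of $2c_r$.
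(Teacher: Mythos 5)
Your local-search skeleton is exactly the right one --- it is the same potential-function argument the paper runs (in its proof of the list version, Lemma~\ref{lem2}), and you have correctly isolated the real difficulty: the hypothesis bounds only row sums, so the quantity $R_i+C_i$ coming out of the first-order condition need not be $\le 2$. But the proposal stops at precisely the point where the proof actually happens, and neither of your two sketched workarounds closes the gap. The column-damping idea fails on the transfer back: if $a'_{ij}=a_{ij}/\max(1,C_j)$, then the conclusion $\sum_{j\in S_r}a'_{ij}\le 2c_r$ for $A'$ says nothing about $\sum_{j\in S_r}a_{ij}=\sum_{j\in S_r}\max(1,C_j)\,a'_{ij}$, since the damping factors can be arbitrarily large and vary with $j$; there is no ``delicate arrangement'' that recovers the factor $2$ from a column rescaling. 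The peeling idea rests on the unproved claim that a vertex of large column sum must land in a class where its row-contribution is small; the first-order condition gives only $(\alpha_r+\beta_r)/c_r\le R_i+C_i$, which is weak exactly when $C_i$ is large, and the recursion would in any case have to account for the removed vertex's contribution to the rows of the surviving vertices.

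The missing idea is a \emph{row} rescaling by the left Perron eigenvector. First reduce to the case $\sum_j a_{ij}=1$ for all $i$ (pad entries up) and $a_{ij}>0$ for $i\ne j$ (continuity/compactness). Then $A$ is nonnegative and irreducible with $A\mathbf{1}=\mathbf{1}$, so by Perron--Frobenius its spectral radius is $1$ and there is a positive left eigenvector $u$ with $\sum_i u_i a_{ij}=u_j$. Setting $b_{ij}=u_i a_{ij}$ gives a matrix whose $i$-th row sum \emph{and} $i$-th column sum both equal $u_i$. Now run your argument verbatim on $\Phi=\sum_r \frac{1}{c_r}\sum_{i,j\in S_r}b_{ij}$: the convex combination step yields $\frac{1}{c_r}\sum_{j\in S_r}(b_{ij}+b_{ji})\le R_i^B+C_i^B=2u_i$, hence $\sum_{j\in S_r}u_i a_{ij}\le \sum_{j\in S_r}(b_{ij}+b_{ji})\le 2c_r u_i$, and dividing by $u_i>0$ gives $\sum_{j\in S_r}a_{ij}\le 2c_r$. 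Because the rescaling acts row by row, it cancels exactly in the final division --- this is why the factor of $2$ survives, which is the point your proposal leaves open.
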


\begin{proof}[Proof of Theorem~\ref{thm1}] 
    Let $D$ be a digraph on $n$ vertices with vertex set $\left\{ v_{1}, v_{2}, \dots, v_{n} \right\}$ and write $d^{+}(v_{i})$ for the out-degree of $v_i$.
    Let $A = (a_{ij})$ be an $n\times n$ matrix where $a_{ij} = \frac{1}{d^{+}(v_{i})}$ if there is a directed edge from $v_{i}$ to $v_{j}$ and $a_{ij} = 0$ otherwise.
    We apply Lemma~\ref{lem1} with $t = 2k$ and $c_{i} = \frac{1}{2k}$ for $1 \le i \le 2k$ obtaining a partition of $\left\{ 1, 2, \dots, n \right\}$ into sets $S_{1}, S_{2}, \dots, S_{2k}$, such that for every $r$ and every $i \in S_{r}$, $\sum_{j \in S_{r}} a_{ij} \le \frac{1}{k}$. Equivalently, the number of out-neighbours of $v_i$ that have the same colour as $v_i$ is at most $\frac{d^{+}(v_{i})}{k}$ where the colouring of $D$ is defined by the partition $S_1\cup S_2\cup\dots\cup S_{2k}$.
\end{proof}

Question~\ref{que:main} has now been reduced to whether $m(k)$ is $2k-1$ or $2k$.
\begin{question}
\label{que:remaining}
Is every digraph $\frac{1}{k}$-majority $(2k-1)$-colourable?
\end{question}

Surprisingly, this is open even for $k=2$. Kreutzer, Oum, Seymour, van der Zypen and Wood~\cite{Kreutzer} gave an elegant argument showing that every digraph is $\frac{1}{2}$-majority $4$-colourable and they conjectured that $m(2)=3$.

\begin{conjecture}
\label{conj:2}
    Every digraph is $\frac{1}{2}$-majority $3$-colourable.
\end{conjecture}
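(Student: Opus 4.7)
Our plan is to attack the conjecture by combining a probabilistic argument for vertices of large out-degree with a local-swap/extremal analysis for the low out-degree regime. The first step is the observation that a uniformly random $3$-colouring assigns each vertex $v$ an expected $d^+(v)/3$ same-colour out-neighbours, and a standard Chernoff bound shows that the probability $v$ is \emph{bad} (i.e.\ strictly more than $d^+(v)/2$ of its out-neighbours share its colour) is at most $\exp(-c\,d^+(v))$ for some absolute constant $c>0$. Since the badness event at $v$ depends only on the colours assigned to $v$ and to its out-neighbours, the Lov\'asz Local Lemma then yields a valid $3$-colouring as soon as $d^+(v)$ is at least logarithmic in the maximum in- and out-degrees at every vertex. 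This already handles all digraphs of sufficiently large minimum out-degree and reduces Conjecture~\ref{conj:2} to the low out-degree regime.

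In this regime we would pass to a minimum counterexample $D$ and take a $3$-colouring of $D$ minimising the number of bad vertices. At a bad vertex $v$, some colour --- say red --- is taken by a strict majority of its out-neighbours, and by pigeonhole one of the other two colours --- say blue --- is taken by fewer than $d^+(v)/4$ of them. Recolouring $v$ to blue would make $v$ good, so by minimality this swap must create a new bad vertex; the only candidates are blue in-neighbours $u$ of $v$ whose same-colour out-neighbour count was exactly $\lfloor d^+(u)/2\rfloor$ before the swap. Iterating across all bad vertices and all available swap-colours produces a highly constrained subdigraph of ``tight'' in-neighbours with prescribed colour patterns on their out-neighbourhoods, and we would then try to force a contradiction either by a discharging argument on this tight subdigraph or by a multi-vertex swap that repairs several bad vertices simultaneously.

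The main obstacle is that the natural potential function counting monochromatic arcs is not monotone under these swaps: recolouring $v$ may turn many arcs $u\to v$ from non-monochromatic into monochromatic at once. To overcome this we would try a weighted potential that charges each arc $u\to v$ according to the current slack $\lfloor d^+(u)/2\rfloor-s(u)$, where $s(u)$ denotes the number of same-colour out-neighbours of $u$, so that only tight vertices contribute and the gains from repairing $v$ can be balanced against losses at its in-neighbours. A complementary route, which may be more tractable, is to prove a digraph-tailored refinement of Lemma~\ref{lem1} yielding the bound $\sum_{j\in S_r}a_{ij}\le\tfrac{3}{2}c_r$ in the case $t=3$: setting $c_1=c_2=c_3=\tfrac{1}{3}$ would then give the conjecture directly, and the matrix-partitioning techniques of Ball and Alon seem a natural starting point for such a refinement.
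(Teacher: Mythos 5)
You should first note that the statement you are addressing is presented in the paper as a conjecture (due to Kreutzer, Oum, Seymour, van der Zypen and Wood) and is \emph{not} proved there; the paper only offers partial evidence, namely Theorems~\ref{almost_theorem} and~\ref{mindeg}, both restricted to tournaments. Your text is likewise not a proof but a programme, and its two concrete pillars do not combine into a valid reduction. The Chernoff-plus-Local-Lemma step only applies when every out-degree is at least logarithmic in the relevant dependency degrees; it does not ``reduce the conjecture to the low out-degree regime'', because in a general digraph the vertices of small out-degree are interleaved with the rest: whether a large-out-degree vertex is bad depends on the colours given to its small-out-degree out-neighbours and vice versa, so the two regimes cannot be coloured separately, nor can the low-degree vertices be deleted and reinserted. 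Moreover, unlike in tournaments, where the paper's key counting observation $|S_i|\le 2^{i+1}-1$ bounds the number of vertices of small out-degree and makes the first-moment computation of Theorem~\ref{almost_theorem} work, a general digraph can have arbitrarily many vertices of out-degree $1$ or $2$; the ``low out-degree regime'' is therefore not a residual case but carries essentially the whole difficulty of the conjecture.

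The second pillar is unfinished by your own account: after the single-vertex recolouring analysis you only say you ``would try'' a weighted potential or a discharging argument, and you correctly identify the obstruction (recolouring $v$ may create many new monochromatic arcs into $v$) without giving any argument that overcomes it, so no contradiction is ever derived from the minimum counterexample. Finally, the proposed strengthening of Lemma~\ref{lem1} from $2c_r$ to $\frac{3}{2}c_r$ in the case $t=3$ is not a ``complementary route'' but a restatement of the open problem: such a bound would immediately yield Conjecture~\ref{conj:2}, and an analogous improvement for general $t$ would answer Question~\ref{que:remaining}, which the paper leaves open. Nothing in the Ball--Alon argument suggests how to beat the factor $2$, which enters through the symmetrisation $b_{ij}+b_{ji}$ (see the proof of Lemma~\ref{lem2}), and you offer no evidence that the stronger matrix statement is even true. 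In short, each of your routes either stops at results comparable to the paper's partial evidence or assumes the conjecture in disguise; there is a genuine gap, consistent with the statement being open.
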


We provide evidence for this conjecture by proving that tournaments are \emph{almost} $\frac{1}{2}$-majority $3$-colourable.

\begin{theorem}
\label{almost_theorem}
Every tournament can be $3$-coloured in such a way that all but at most $205$ vertices receive the same colour as at most half of their out-neighbours. 
\end{theorem}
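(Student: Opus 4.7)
The plan is to apply the probabilistic method to a uniform random $3$-colouring of the tournament $T$. Colour each vertex of $T$ independently and uniformly at random with one of three colours. For a vertex $v$ of out-degree $d$, conditioning on $c(v)$, the number of out-neighbours sharing $v$'s colour is distributed as $\mathrm{Bin}(d,1/3)$, so the probability that $v$ violates the ``at most half'' condition --- call $v$ \emph{bad} --- is $f(d) := \Pr\bigl[\mathrm{Bin}(d,1/3) > d/2\bigr]$. A standard Chernoff/Hoeffding estimate gives $f(d)\le \exp(-d/18)$, so $f$ decays exponentially.

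The essential tournament-theoretic input is a Landau-type inequality: for every $k\ge 0$, the set $S_k = \{v\in V(T) : d^+_T(v)\le k\}$ satisfies $|S_k|\le 2k+1$. Indeed, the sub-tournament on $S_k$ contains $\binom{|S_k|}{2}$ edges, each contributing to $d^+_{T[S_k]}(v)\le d^+_T(v)\le k$ for some $v\in S_k$, so $\binom{|S_k|}{2}\le k|S_k|$. Equivalently, sorting the out-degrees as $d_1\le d_2\le\cdots\le d_n$, one has $d_i\ge \lceil (i-1)/2\rceil$.

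I would then combine the two ingredients via linearity of expectation:
\[
\mathbb{E}\bigl[|\text{bad}|\bigr] = \sum_{i=1}^n f(d_i) \;\le\; \sum_{i=1}^{\infty}\, \sup_{d\ge \lceil (i-1)/2\rceil} f(d),
\]
and the right-hand side converges to a universal constant $C$ independent of $n$, since $f$ decays exponentially in $d$. By the probabilistic method, some $3$-colouring has at most $\lfloor C\rfloor$ bad vertices. Evaluating $f(d)$ exactly for small $d$ and using a Chernoff tail for large $d$ should then pin $C$ down below $205$.

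The main obstacle I foresee is the arithmetic of turning the qualitative ``universal constant'' into the explicit value $205$. The function $f$ is not monotone in $d$ --- it peaks at odd $d$ and dips at even $d$ --- so the supremum above requires a short case analysis, and one must optimise the cutoff at which the exact small-$d$ values hand over to the Chernoff tail. Should the naive expectation still exceed $205$, a simple alteration step (recolour each leftover bad vertex into a colour held by few of its out-neighbours, while tracking the small number of in-neighbours it can spoil) would provide a buffer to recover the stated bound.
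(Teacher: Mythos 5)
Your proposal is correct and follows essentially the same route as the paper: a uniform random $3$-colouring, a Chernoff-type tail bound on the probability that a vertex is bad, and the edge-counting fact that a tournament has at most $2k+1$ vertices of out-degree at most $k$ (the paper groups the degrees dyadically into sets of size at most $2^{i+1}$, while your cumulative version is exactly the constraint its linear-programming refinement uses). The arithmetic you worry about is in fact painless: since your bound $f(d)\le e^{-d/18}$ is decreasing in $d$, your sum is at most $1+2\sum_{j\ge 1}e^{-j/18}<37$, so the expectation is already far below $206$ and no exact small-$d$ evaluation or alteration step is needed.
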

\begin{proof}
The proof relies on an observation that in a tournament $T$, the set $S_{i}=\{x\in V(T): 2^{i-1} \leq d^{+}(x) < 2^{i}\}$ has size at most $2^{i+1}$. Indeed, the sum of the out-degrees of the vertices of $S_i$ is at least $\binom{|S_i|}{2}$, the number of edges inside $S_i$. On the other hand, this sum is at most $(2^{i}-1)|S_i|$ by the definition of $S_i$. Therefore, $\binom{|S_i|}{2} \le (2^{i}-1)|S_i|$ and hence, $|S_i| \le 2^{i+1}-1$.

We proceed by randomly assigning one of three colours to each vertex independently with probability $1/3$. 
Given a vertex $x$, let $B_{x}$ be the number of out-neighbours of $x$ which receive the same colour as $x$.
We say that $x$ is \emph{bad} if $B_{x} > d^{+}(x)/2$. Trivially $\mathbb{E}(B_x) = d^{+}(x)/3$, and hence by a Chernoff-type bound, it follows that, for $x\in S_i$,
\begin{align*}
\label{eq1}
\mathbb{P} (x \text{ is bad}) &= \mathbb{P}(B_{x} > d^{+}(x)/2) = \mathbb{P}\left(B_{x} >  (1 + 1/2) \mathbb{E}(B(x))\right) \\
& \leq \exp \left(-\frac{(1/2)^2}{3}\mathbb{E}(B_{x})\right) = \exp(-d^{+}(x)/36) \le \exp(-2^{i-1}/36).
\end{align*}
Notice that if $i \ge 11$ then $\mathbb{P}(x \text{ is bad}) \le 2^{-(2i-7)}$. Let $X$ denote the total number of bad vertices. Since the vertices of out-degree $0$ cannot be bad,
\begin{align*}
    \mathbb{E}(X) &= \sum_{i\ge 1}\sum_{x\in S_i} \mathbb{P} (x \text{ is bad}) \le \sum_{i = 1}^{10} 2^{i+1} \exp(-2^{i-1}/36)+ \sum_{i \ge 11} 2^{i+1} 2^{-(2i-7)}  \\
    & \le 205 + \sum_{i \ge 11}2^{-i + 8} = 205 + \frac{1}{4} < 206.
\end{align*}
Hence, there is a $3$-colouring such that all but at most $205$ vertices receive the same colour as at most half of their out-neighbours. 
\end{proof}

Observe also that the same argument proves a special case of Conjecture~\ref{conj:2}.

\begin{theorem}\label{mindeg}
    Every tournament with minimum out-degree at least $2^{10}$ is $\frac{1}{2}$-majority $3$-colour\-able.
\end{theorem}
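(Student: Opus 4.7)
The plan is to reuse the random $3$-colouring and the bookkeeping that already appeared in the proof of Theorem~\ref{almost_theorem}, but observe that the degree hypothesis kills the part of the sum that was previously responsible for essentially all of the expected number of bad vertices. Specifically, assign each vertex of the tournament $T$ one of the three colours independently and uniformly at random, define $B_x$ and the notion of a bad vertex exactly as before, and let $X$ count the bad vertices.

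Since every vertex $x$ satisfies $d^{+}(x) \ge 2^{10}$, each $x$ lies in $S_i$ for some $i \ge 11$; in particular the classes $S_1,\dots,S_{10}$ from the previous proof are empty. The Chernoff estimate already derived gives $\mathbb{P}(x \text{ is bad}) \le 2^{-(2i-7)}$ for $x \in S_i$ with $i \ge 11$, and the bound $|S_i| \le 2^{i+1}$ still holds. Therefore
\begin{equation*}
    \mathbb{E}(X) \;\le\; \sum_{i \ge 11} 2^{i+1}\cdot 2^{-(2i-7)} \;=\; \sum_{i \ge 11} 2^{-i+8} \;=\; \tfrac{1}{4} \;<\; 1.
\end{equation*}
Since $X$ is a non-negative integer-valued random variable with $\mathbb{E}(X) < 1$, there is positive probability that $X=0$, and hence there exists a $3$-colouring in which no vertex is bad, i.e.\ every vertex shares its colour with at most half of its out-neighbours.

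There is essentially no hard step: the whole argument reduces to noticing that the ``bulk'' contribution $\sum_{i=1}^{10} 2^{i+1}\exp(-2^{i-1}/36)$, which produced the constant $205$ in Theorem~\ref{almost_theorem}, simply vanishes once the minimum out-degree is pushed above $2^{10}$, leaving only the geometric tail, whose sum is already known to be less than one. The only mild point to check is that the constant $2^{10}$ matches the threshold $i \ge 11$ used in the Chernoff estimate of the previous proof, which it does because $d^{+}(x) \ge 2^{10}$ forces $x \in S_i$ with $i \ge 11$.
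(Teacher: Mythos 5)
Your proof is correct and is exactly the argument the paper intends: the paper's own justification of this theorem is the remark that the same random $3$-colouring argument applies, with the minimum out-degree hypothesis forcing every vertex into a class $S_i$ with $i\ge 11$, so that $\mathbb{E}(X)\le 1/4<1$ and a colouring with no bad vertices exists. No differences worth noting.
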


We remark that Theorem~\ref{almost_theorem} can be strengthened ($205$ can be replaced by $7$) by solving a linear programming problem. Recall that the expected number of bad vertices of out-degree at least $1024$ is at most $1/4$. We shall use linear programming to show that the expected number of bad vertices of out-degree less than $1024$ is less than $7.75$.  Let $V_{i}$ be the set of vertices of out-degree $i$ for $i \in \left\{ 1,2, \dots, 1023 \right\}$ and note that the expected number of bad vertices of out-degree at most $1023$ is $f(v_{1},v_2, \dots, v_{1023}) = \sum_{i=1}^{1023}v_{i}p_{i}$ where $v_{i} = |V_{i}|$ and $p_{i} = \sum_{j = \lceil \frac{i+1}{2} \rceil}^{i}\binom{i}{j}(1/3)^{j}(2/3)^{i-j}$.
As before, observe that the number of vertices of degree at most $i$ is at most $2i+1$, and therefore, $\sum_{j=1}^{i} v_{i} \le 2i+1$, leading to the following linear program.
\begin{align*}
    \text{Maximize: } & f(v_{1},v_2, \dots, v_{1023}) \\
    \text{Subject to: } & \sum_{j=1}^{i} v_{j} \le 2i+1, \text{ for } i \in \{1,2, \dots,  1023\} \\
    \text{Subject to: } & v_{i} \ge 0, \text{ for } i \in \{1,2,  \dots, 1023\} .
\end{align*}

See Appendix~\ref{app} for the source code. Similarly, we can replace $2^{10}$ in Theorem~\ref{mindeg} by $55$, by using the same linear program to show that the expected number of bad vertices of out-degree in $[55,1023]$ is less than $3/4$.

Let us now change direction to a more general concept of majority choosability. A digraph is \emph{$\frac{1}{k}$-majority $m$-choosable} if for any assignment of lists of $m$ colours to the vertices, there exists a $\frac{1}{k}$-majority colouring where each vertex gets a colour from its list. In particular, a $\frac{1}{k}$-majority $m$-choosable digraph is $\frac{1}{k}$-majority $m$-colourable. Kreutzer, Oum, Seymour, van der Zypen and Wood~\cite{Kreutzer} asked whether there exists a finite number $m$ such that every digraph is $\frac{1}{2}$-majority $m$-choosable. Anholcer, Bosek and Grytczuk~\cite{Anholcer} showed that the statement holds with $m=4$. We generalise their result as follows.

\begin{theorem}
\label{thm2}
Every digraph is $\frac{1}{k}$-majority $2k$-choosable for all $k\geq 2$.
\end{theorem}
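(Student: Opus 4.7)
By a de Bruijn--Erd\H{o}s-style compactness argument, it suffices to prove the statement for every finite digraph $D$ with list assignment $L$ of size $2k$. My primary plan is to follow the matrix-partitioning approach of Theorem~\ref{thm1}, after first establishing a list-colouring analogue of Lemma~\ref{lem1}: namely, given the matrix hypotheses of Lemma~\ref{lem1} together with lists $L_i \subseteq \{1,\dots,t\}$ of size $2k$ for each index $i$, produce a partition $S_1,\dots,S_t$ with $i \in S_r$ only if $r \in L_i$, satisfying $\sum_{j \in S_r} a_{ij} \le 1/k$ for every $r$ and every $i \in S_r$. Once this list analogue is in hand, the identical choice $a_{ij} = 1/d^+(v_i)$ when $v_i \to v_j$ (and $0$ otherwise) as in the proof of Theorem~\ref{thm1} delivers the desired $\frac{1}{k}$-majority list colouring immediately.

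Should the list analogue of Lemma~\ref{lem1} resist direct proof, the fall-back plan is a probabilistic one. Colour each vertex $v$ independently and uniformly from $L(v)$; for any directed edge $v \to u$,
\[
\mathbb{P}(c(u) = c(v)) = \frac{|L(u) \cap L(v)|}{|L(u)|\,|L(v)|} \le \frac{1}{2k},
\]
so $\mathbb{E}[B_v] \le d^+(v)/(2k)$, a factor of two below the target $d^+(v)/k$. A Chernoff bound of the type used in Theorem~\ref{almost_theorem} then gives $\mathbb{P}(B_v > d^+(v)/k) \le \exp(-c_k d^+(v))$ for some constant $c_k>0$. Since the event $A_v := \{B_v > d^+(v)/k\}$ depends only on the colours of $v$ and of $N^+(v)$, one could close the argument via an asymmetric Lov\'asz Local Lemma with weights $x_v$ depending on $d^+(v)$, generalising the approach of Anholcer--Bosek--Grytczuk for $k=2$.

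\textbf{Main obstacle.} Both plans share the same pain point: vertices of small out-degree. Adapting Ball's proof of Lemma~\ref{lem1} to the list setting requires controlling the behaviour in parts $r \notin L_i$ and appears to break its underlying symmetry. The probabilistic plan fails outright for small $d^+(v)$, since the Chernoff bound is then weak and the dependency degree of $A_v$ in the LLL can be arbitrarily large (even a single out-neighbour of $v$ may be shared with many other bad events). I expect the resolution to require a careful choice of LLL weights calibrated to the out-degree of $v$ and to the in-degrees of the vertices in $N^+(v)$, or alternatively a structural preprocessing step that deterministically disposes of low out-degree vertices before any probabilistic tool is invoked on the remainder of the digraph.
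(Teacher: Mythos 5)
Your proposal correctly identifies the statement that needs proving — a list analogue of Lemma~\ref{lem1} — but it does not prove it, and the obstacle you flag as the ``main pain point'' is exactly the step the paper supplies; as written, both of your plans are deferred rather than executed, so there is a genuine gap. The fall-back probabilistic route cannot be salvaged in the form you sketch: a vertex of out-degree $1$ (or any vertex of bounded out-degree) gets nothing from Chernoff, and no choice of Local Lemma weights fixes this, because the statement for such vertices is essentially a proper-list-colouring condition that a uniformly random choice violates with probability bounded away from $0$ independently of the rest of the digraph; some deterministic mechanism for low out-degree vertices is unavoidable, and you do not provide one.

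The paper's resolution (its Lemma~\ref{lem2}) is a direct adaptation of the proof of Lemma~\ref{lem1}, and the symmetry you feared losing is restored by a Perron--Frobenius weighting rather than by prescribing part weights $c_r$. Assume $\sum_j a_{ij}=1$ and $a_{ij}>0$ for $i\neq j$ (by continuity); let $(u_1,\dots,u_n)$ be a positive left eigenvector for the eigenvalue $1$ and set $b_{ij}=u_i a_{ij}$, so that $\sum_j b_{ij}=\sum_j b_{ji}=u_i$ for every $i$. Now choose $f$ with $f(i)\in L_i$ minimising the total monochromatic weight $\sum_r \sum_{i,j\in f^{-1}(r)} b_{ij}$. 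The swap inequality obtained by recolouring $i$ from $r=f(i)$ to any $l\in L_i$ reads $\sum_{j\in f^{-1}(r)}(b_{ij}+b_{ji})\le \sum_{j\in f^{-1}(l)}(b_{ij}+b_{ji})$, and averaging over the $m=2k$ elements $l\in L_i$ gives
\[
m\sum_{j\in f^{-1}(r)}(b_{ij}+b_{ji})\ \le\ \sum_{j=1}^n (b_{ij}+b_{ji})\ =\ 2u_i ,
\]
whence $\sum_{j\in f^{-1}(r)} a_{ij}\le 2/m = 1/k$ after dividing by $u_i$. The key point you were missing is that in the list setting one should not try to reproduce the global partition with part budgets $c_r$; each vertex only needs a bound relative to its own list, and averaging the swap inequality over that list plays the role the $c_r$'s play in Lemma~\ref{lem1}. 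With this lemma in hand, your Plan A concludes exactly as you describe, via the matrix $a_{ij}=1/d^+(v_i)$.
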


Theorem~\ref{thm2} was independently proved by Fiachra Knox and Robert \v{S}\'amal~\cite{Knox}. We prove Theorem~\ref{thm2} using a slight modification of Lemma~\ref{lem1} whose proof is very similar to that of Lemma~\ref{lem1}.

\begin{lemma}
\label{lem2}
    Let $A = (a_{ij})$ be an $n\times n$ real matrix where $a_{ii} = 0$ for all $i$, $a_{ij} \ge 0$ for all $i \neq j$, and $\sum_{j}a_{ij} \le 1$ for all $i$. 
    Then, for every $m$ and subsets $L_{1}, L_{2}, \dots, L_{n} \subset \mathbb{N}$ of size $m$, there is a function $f:\left\{ 1, 2, \dots, n \right\} \rightarrow \mathbb{N}$ such that, for every $i$, $f(i) \in L_{i}$ and $\sum_{j \in f^{-1}(r)} a_{ij} \le \frac{2}{m}$ where $r=f(i)$.
\end{lemma}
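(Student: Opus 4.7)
The plan is to imitate the extremal (potential-minimisation) argument underlying Lemma~\ref{lem1}, with the only modification being that each vertex's color is required to come from its list.

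Concretely, I would work with the (finite) family $\mathcal F$ of all functions $f:\{1,\ldots,n\}\to\mathbb N$ with $f(i)\in L_i$ for every $i$, and pick $f^*\in\mathcal F$ minimising
$$\Phi(f) \;=\; \sum_{r\in\mathbb N}\sum_{i,j\in f^{-1}(r)} a_{ij}.$$
This is the same potential used for Lemma~\ref{lem1}, with the uniform weights $c_r=1/m$ absorbed into an overall constant, so it is natural to hope that the sole minor change—restricting swaps to $L_i$—suffices.

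I would then argue that $f^*$ itself is the required choice function. Suppose for contradiction that some $i$ with $f^*(i)=r$ has $\sum_{j\in f^{*-1}(r)} a_{ij} > 2/m$. For each $r'\in L_i\setminus\{r\}$, the function $f_{r'}\in\mathcal F$ that agrees with $f^*$ off $i$ and sends $i\mapsto r'$ satisfies $\Phi(f_{r'})\ge\Phi(f^*)$ by minimality. Unpacking this inequality (and using $a_{ii}=0$) should yield a local estimate of the shape
$$\sum_{j\in f^{*-1}(r)}(a_{ij}+a_{ji}) \;\le\; \sum_{j\in f^{*-1}(r')}(a_{ij}+a_{ji}).$$
Summing over the $m-1$ admissible alternatives $r'\in L_i\setminus\{r\}$, using that the preimages $f^{*-1}(r')$ are pairwise disjoint, and then invoking the row-sum bound $\sum_j a_{ij}\le 1$ should force $\sum_{j\in f^{*-1}(r)} a_{ij}\le 2/m$, contradicting the assumption.

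The only step I expect to require real care is the averaging in the final inequality: one must check that restricting the swap to colors in $L_i$ does not cost averaging power and that the constant $2/m$ genuinely emerges. Since $|L_i|=m$, the $m-1$ allowed alternatives $L_i\setminus\{r\}$ play exactly the same role as the $t-1$ alternative parts in Lemma~\ref{lem1} with uniform weights $c_r=1/m$, so the factor $2/m$ should drop out just as it does there. The list restriction itself is otherwise invisible to the argument, since we never need to move $i$ outside $L_i$.
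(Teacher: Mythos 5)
There is a genuine gap, and it is exactly at the step you flagged as needing ``real care.'' Your swap inequality is correct: if $f^*$ minimises $\Phi$ and $f^*(i)=r$, then for every $r'\in L_i$,
\[
\sum_{j\in f^{*-1}(r)}(a_{ij}+a_{ji})\;\le\;\sum_{j\in f^{*-1}(r')}(a_{ij}+a_{ji}),
\]
and summing over the $m$ colours of $L_i$ and using disjointness of the preimages gives
\[
m\sum_{j\in f^{*-1}(r)}(a_{ij}+a_{ji})\;\le\;\sum_{j=1}^{n}a_{ij}+\sum_{j=1}^{n}a_{ji}.
\]
The first sum on the right is at most $1$ by hypothesis, but the second is a \emph{column} sum of $A$, and the hypotheses bound only the row sums. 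In the intended application the column sum $\sum_j a_{ji}=\sum_{v_j\to v_i}1/d^{+}(v_j)$ can be as large as $n-1$ (a vertex receiving edges from many vertices of out-degree $1$), so this argument only yields $\sum_{j\in f^{*-1}(r)}a_{ij}\le(1+\sum_j a_{ji})/m$, not $2/m$. Nothing in the list structure causes the problem --- the same issue would already break this unweighted potential argument for Lemma~\ref{lem1}; the asymmetry of $A$ does.

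The missing idea (used both in Alon's proof of Lemma~\ref{lem1} and in the paper's proof of Lemma~\ref{lem2}) is a symmetrisation via the Perron--Frobenius eigenvector. One first perturbs $A$ so that every row sum equals $1$ and all off-diagonal entries are positive (a continuity argument), so that $1$ is the spectral radius with a positive left eigenvector $(u_1,\dots,u_n)$. Setting $b_{ij}=u_i a_{ij}$ one gets $\sum_j b_{ij}=u_i$ \emph{and} $\sum_j b_{ji}=u_i$, i.e.\ row and column sums now agree. Minimising your potential with the weights $b_{ij}$ in place of $a_{ij}$, the identical swap-and-sum computation gives $m\sum_{j\in f^{-1}(r)}(b_{ij}+b_{ji})\le 2u_i$, hence $\sum_{j\in f^{-1}(r)}u_i a_{ij}\le 2u_i/m$, and dividing by $u_i>0$ yields the stated bound $2/m$. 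Your treatment of the list constraint itself is fine --- restricting swaps to $L_i$ and averaging over its $m$ elements is precisely what the paper does --- but without the reweighting the constant $2/m$ does not emerge.
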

\begin{proof}
    By increasing some of the numbers $a_{ij}$, if needed, we may assume that $\sum_{j}a_{ij} = 1$ for all $i$. We may also assume, by an obvious continuity argument, that $a_{ij} > 0$ for all $i \neq j$. Thus, by the Perron--Frobenius Theorem, $1$ is the largest eigenvalue of $A$ with right eigenvector $\left( 1, 1,  \dots, 1 \right)$ and left eigenvector $\left( u_1, u_2,  \dots, u_n \right)$ in which all entries are positive. It follows that $\sum_{i}u_{i}a_{ij} = u_{j}$. Define $b_{ij} = u_{i}a_{ij}$, then $\sum_{i}b_{ij} = u_{j}$ and $\sum_{j}b_{ij} = u_{i} \left( \sum_{j}a_{ij} \right) = u_{i}$.

    Let $f: \left\{ 1,2,\dots,n \right\} \rightarrow \mathbb{N}$ be a function such that $f(i) \in L_{i}$ and $f$ minimises the sum $\sum_{r \in \mathbb{N}} \sum_{i, j \in f^{-1}(r)}b_{ij}$. By minimality, the value of the sum will not decrease if we change $f(i)$ from $r$ to $l$ where $l \in L_{i}$. Therefore, for any $i \in f^{-1}(r)$ and $l \in L_{i}$, we have
    \[
        \sum_{j \in f^{-1}(r)}(b_{ij} + b_{ji}) \le \sum_{j \in f^{-1}(l)}(b_{ij} + b_{ji}).
    \]
    Summing over all $l \in L_{i}$, we conclude that
		\[
            m\sum_{j \in f^{-1}(r)}(b_{ij} + b_{ji}) \le \sum_{j \in f^{-1}(L_i)}(b_{ij} + b_{ji}) \le \sum_{j = 1}^{n}(b_{ij} + b_{ji}) = 2u_{i}.
		\]
    Hence, $\sum_{j \in f^{-1}(r)}u_{i}a_{ij} = \sum_{j \in f^{-1}(r)} b_{ij} \le \sum_{j \in f^{-1}(r)}(b_{ij} + b_{ji}) \le \frac{2u_{i}}{m}$. Dividing by $u_{i}$, the desired result follows.
\end{proof}

\begin{proof}[Proof of Theorem~\ref{thm2}]
    The proof is the same as that of Theorem~\ref{thm1}, using Lemma~\ref{lem2} instead of Lemma~\ref{lem1}.
\end{proof}

In fact, the same statement also holds when the size of the lists is odd.

\begin{corollary}
Every digraph is $\frac{2}{m}$-majority $m$-choosable for all $m\geq 2$.
\end{corollary}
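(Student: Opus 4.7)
The plan is to observe that this corollary is essentially just a restatement of Lemma~\ref{lem2} for an arbitrary list size $m$: the bound $\frac{2}{m}$ appearing in the corollary is already the bound given by the conclusion of Lemma~\ref{lem2}. So I expect the proof to be a near-copy of the proof of Theorem~\ref{thm2}, with no new ingredient required.

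Concretely, given a digraph $D$ on $n$ vertices $v_1, \dots, v_n$ and a list assignment $L_1, \dots, L_n$ with $|L_i| = m$, I would form the same matrix $A = (a_{ij})$ used in the proof of Theorem~\ref{thm1}: set $a_{ij} = 1/d^{+}(v_i)$ if $v_i v_j$ is an edge of $D$, and $a_{ij} = 0$ otherwise. (Vertices with $d^{+}(v_i) = 0$ contribute a zero row, which is harmless and makes them automatically satisfy the majority condition.) Then $a_{ii} = 0$, $a_{ij} \ge 0$, and $\sum_j a_{ij} \le 1$ for every $i$, so the hypotheses of Lemma~\ref{lem2} are met. Applying that lemma produces a function $f: \{1, \dots, n\} \to \mathbb{N}$ with $f(i) \in L_i$ and $\sum_{j \in f^{-1}(r)} a_{ij} \le \frac{2}{m}$, where $r = f(i)$. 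Interpreting $f$ as a colouring, the left-hand side equals $1/d^{+}(v_i)$ times the number of out-neighbours of $v_i$ sharing a colour with $v_i$, so each vertex has at most a $\frac{2}{m}$ proportion of its out-neighbours with its own colour, which is exactly the required $\frac{2}{m}$-majority colouring chosen from the lists.

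I do not anticipate any genuine obstacle here: the key observation is that neither Lemma~\ref{lem2} nor the proof of Theorem~\ref{thm2} uses parity of the list size, so the substitution $2k \leadsto m$ in Theorem~\ref{thm2} goes through verbatim and produces $\frac{2}{m}$ in place of $\frac{1}{k}$. The only reason Theorem~\ref{thm2} was stated for even $m = 2k$ is that the original $\frac{1}{k}$-majority notion is defined only for integers $k$; the corollary simply extracts the odd case, where the fraction no longer takes the form $\frac{1}{k}$ and so needs to be written as $\frac{2}{m}$.
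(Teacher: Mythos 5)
Your proposal is correct and matches the paper's intended argument exactly: the corollary follows by applying Lemma~\ref{lem2} (which never uses parity of the list size $m$) to the same out-degree-normalised matrix as in the proof of Theorem~\ref{thm1}, yielding the $\frac{2}{m}$ bound directly. The paper gives no separate proof precisely because, as you observe, the argument for Theorem~\ref{thm2} goes through verbatim for odd $m$.
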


This statement generalises a result of Anholcer, Bosek and Grytczuk~\cite{Anholcer} where they prove the case $m=3$ which says that, given a digraph with colour lists of size three assigned to the vertices, there is a colouring from these lists such that each vertex has the same colour as at most two thirds of its out-neighbours. 

We have established that the $\frac{1}{k}$-majority choosability number is either $2k-1$ or $2k$. Let us end this note with an analogue of Question~\ref{que:remaining}.

\begin{question}
    Is every digraph $\frac{1}{k}$-majority $(2k-1)$-choosable?
\end{question}

\bibliographystyle{siam}
\bibliography{majcol}

\begin{appendices}
\section{Linear program}\label{app}
We use the toolkit \cite{GLPK} to solve the linear program with the following source code:
\vspace{20pt}
\lstinputlisting{model.mod}
\end{appendices}

\end{document}